\numberwithin{equation}{section}
\newtheorem{Proposition}{Proposition}[section]
\newtheorem{Theorem}[Proposition]{Theorem}
\newtheorem{Definition}[Proposition]{Definition}
\newtheorem{Remark}[Proposition]{Remark}
\newcommand{\R}{\mathbb{R}}
\begin{document}

\title[Global isochrony]{Global isochronous potentials}

\date{Feb. 15, 2013. To appear in: Qualitative Theory of Dynamical Systems}

\subjclass{Primary: 37J45; Secondary: 34C25.}
\keywords{Hamiltonian systems; global centers; global isochrony.}
\begin{abstract} 
We present  a geometric characterization of the nonlinear smooth functions $V:\R\to \R$ for which the origin
is  a global isochronous center for the scalar equation $\ddot x=-V'(x)$. We revisit  Stillinger and Dorignac isochronous potentials $V$ and show a new simple explicit family. Implicit examples are easily produced.
\end{abstract}

\maketitle
\begin{center}
\emph{Dedicated to Jorge Lewowicz for his $75^{th}$ birthday}
\end{center}

\section{Introduction}  

Consider the scalar differential equation $\ddot x=-g(x)$ under the minimal assumptions that $g$ is continuous, $g'(0)$ exists and it is~$>0$, and that $xg(x)>0$ whenever $x\ne0$. Let $V$ be a primitive of~$g$, with $V(0)=0$. As it is well-known, the Cauchy problems have unique solution, thanks to the energy first integral $\dot x^2/2+V(x)$, all solutions with small enough energy are periodic, and their orbits in the $x,\dot x$ plane encircle the origin $(0,0)$, which is called a \emph{center} for the system. If moreover $V(x)\to+\infty$ as $x\to\pm\infty$, then all solutions are periodic, and the origin $(0,0)$ is a \emph{global center}. When all orbits around a (global) center have the same period the (global) center is called \emph{isochronous}.

After the 1937 paper~\cite{KP} by Koukles and Piskounov, a constructive characterizations of local isochronous centers was proposed by Urabe~\cite{U,U2} in 1961-62, in terms of a differential equation that must be satisfied by~$V$, and that contains an arbitrary odd function. Urabe's method has been cited and developed by many authors, for example recently by  Chouikha, see Strelcyn~\cite{SJ} and the reference therein. 

In 1989 the paper~\cite{z3} gave  a completely different characterization of local isochronous potentials (Theorem~\ref{isochronous} below). It is more constructive than Urabe's method, because we can start with any \emph{involution}~$h$ (Definition~\ref{involution} below), plug it into the simple formula $c(x-h(x))^2$ and the result is an isochronous potential. The involution condition has a very clear geometric meaning: the graph of~$h$ must be symmetric with respect to the diagonal. The involution criterion was independently rediscovered by Cima, Ma\~nosas and Villadelprat~\cite{cima1} in~1999 among several other results on
more general Hamiltonian systems.

In the last few years the literature has seen some resurgent interest in general
isochronous systems (Calogero's book~\cite{Ca} is a prime example),
  and particularly in characterizing \emph{global isochronous potentials} and in exhibiting elementary \emph{explicit examples}, beside the trivial linear oscillator. To our knowledge, the earliest global analytic example was given by Stillinger and Stillinger~\cite{S} in 1989 in a journal addressed to the chemical community. A~subclass of Stillingers' example was rediscovered by Bolotin and MacKay~\cite{BM} in 2003, together with necessary and sufficient conditions for a global center. Dorignac~\cite{dorignac} in 2005 was the first to cite the Stillingers, produced new examples and noticed that we can introduce a rescaling parameter. Among the most recent papers we may cite the ones by Asorey et al.~\cite{Asorey2007} and by Mamode~\cite{mamode}. 

Two papers that concern us most closely appeared in 2011. The first~\cite{z4}  gave a hopefully more accessible account of the involution method, and then it connected it to    some
Hamiltonian systems in dimension 4 which are interesting for Lyapunov stability, also finding
local isochrony examples by means of that dynamics  (see also Section~\ref{stillinger} below). The second paper~\cite{cima3}, by Cima, Gasull and Ma\~nosas, included the construction of an explicit global family of nonlinear centers, which was shown to be isochronous by the technique of involutions too. It was this paper which first drew our attention
to the problem of global examples.

The purpose of this note is to further demonstrate how the involution method is very well suited for the task of finding plenty of explicit global isochronous centers. The original paper~\cite{z3} suggested that involutions can be found by rotating the graph of an even function, but it kept the discussion local and did not pursue examples. Here we describe another \emph{geometric} idea which seems preferable for \emph{global} purposes: we look for involutions as (subsets of) the zero set of two-variable functions $f(x,y)$ which are symmetric, i.e., $f(x,y)=f(y,x)$ (Theorem~\ref{globalinvolution}). Two new examples will be readily obtained this way in Section~\ref{centriglobali}. 

The idea of using symmetric $f(x,y)$ to obtain involutions is elementary, and we are thankful to Armengol Gasull for pointing out to us that something  similar, although with applications unrelated to ours, was described in the 2002 paper~\cite{WW}, which in turn draws from the 1967~\cite{SM}.

In Section~\ref{stillinger} we show that Stillingers' examples can be obtained by choosing $f$ to be a symmetric quadratic polynomial, whose zero set is a hyperbola. It also shows that an example from~\cite{z4} contains Stillingers'. Section~\ref{dorignac} finally finds a symmetric function~$f$ that yields one   of Dorignac's examples
that we revisit because of its remarkable simplicity.

%%%%%%%%%%%%%%%%%%%%%%%%%%%%%%%%%%%%%%%%%%%%%%%%
\section{Constructing isochronous centers}\label{constructing}

The concept of involution is crucial in our approach. Basically, a function $h\colon A\to A$ is an involution if $h\circ h$ is the identity mapping on~$A$. In this paper we will use the term ``involution'' to mean something more specific, as follows:

\begin{Definition}\label{involution} A $C^1$ diffeomorphism $h$ of an  open interval $J\subseteq \R$ onto itself is called an involution if
\begin{equation}\label{h-properties}
  h^{-1}=h,\qquad 0\in J,\qquad
  h(0)=0,\qquad h'(0)=-1.
\end{equation}
\end{Definition}

Geometrically speaking, the condition $h^{-1}=h$ means that the graph of~$h$ is symmetric with respect to the diagonal; indeed  $(x, h(x))$ has $(h(x),x)$ as symmetric point and this coincides
with the point $(h(x),\allowbreak h(h(x)))$ of the graph. The condition $h(0)=0$ means that the graph intersects the diagonal at the origin. For example, start with the hyperbola $y\,x=1$, which is symmetric with respect to the diagonal. If we translate its point $(1,1)$ to the origin we get $(y+1)(x+1)=1$, which can be solved for $y$ as $y=-x/(1+x)$. If we finally take the branch that goes through the origin we arrive at the following involution
\begin{equation}\label{h-upperrectangularhyperbola}
  h(x)=-\frac{x}{1+x},\qquad x\in J=(-1,+\infty)\,.
\end{equation}

A homothety of the plane transforms an involution into another involution:

\begin{Remark}\label{remarkfamily} Let $a\in\R\setminus\{0\}$ and $h$ be an involution on $(b,c)$, then
 $\tilde h(x)=h(a\,x)/a$ is an involution on $(b/a,c/a)$ if $a>0$, on $(c/a,b/a)$ otherwise.
\end{Remark}
In particular $\tilde h^{-1}=\tilde h$, indeed
$\tilde h\bigl(\tilde h(x)\bigr)=h\bigl(a\,h(a\,x)/a\bigr)/a=x$.
In this way \eqref{h-upperrectangularhyperbola} gives the following 1-parameter family of involutions
\begin{equation}\label{h-rational}
  h(x)=-\frac{x}{1+a x}\,, \qquad 
  x\in J=\begin{cases}
  (-1/a,+\infty),&a>0\\
  (-\infty,+\infty),&  a=0\\
  (-\infty,-1/a),& a<0\end{cases}
\end{equation}
These are the only involutions that are rational functions of~$x$, as proved in~\cite{cima1}.

\begin{Theorem}\label{isochronous}
Let $h:J\to J$ be an involution, $\omega>0$, and define
\begin{equation}\label{Visochronous}
  V(x)=\frac{\omega^2}{8}\,\big(x-h(x)\big)^2,\qquad
  x\in J.
\end{equation}
Then  the origin is an isochronous center for $\ddot{x}=-g(x)$, where $g(x)=V'(x)$, namely all orbits which intersect the $J$ interval of the $x$-axis in the $x,\dot x$-plane, are periodic and have the same period $2 \pi/\omega$. Vice versa, let $g$ be continuous on a neighbourhood of $0\in\R$, $g(0)=0$, suppose there exists $g'(0)>0$, and the origin is an isochronous center for $\ddot x=-g(x)$,  then there exist an open interval $J$, $0\in J$, which is a subset of the domain of $g$, and an involution $h:J\to J$  such that \eqref{Visochronous} holds with $V(x)=\int_0^xg(s)ds$ and $\omega= \sqrt{g'(0)}$.
\end{Theorem}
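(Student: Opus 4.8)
The plan is to compute the period of oscillation of $\ddot x=-V'(x)$ directly from the energy integral and to show it equals $2\pi/\omega$ precisely because of the defining relation $V(x)=\tfrac{\omega^2}{8}(x-h(x))^2$ together with the involution property of $h$.

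For the forward direction, fix an energy level $E>0$ small enough that the orbit lies over $J$. The turning points $x_-<0<x_+$ are the two solutions of $V(x)=E$. Writing $u(x)=\tfrac{\omega}{2}\,(x-h(x))$, the hypothesis says $V(x)=\tfrac12 u(x)^2$, so the turning points are exactly where $u(x)=\pm\sqrt{2E}$; since $h$ is a decreasing involution fixing $0$, the map $x\mapsto x-h(x)$ is strictly increasing on $J$ (its derivative is $1-h'(x)$, and $h'<0$ because $h$ reverses orientation), hence $u$ is a strictly increasing $C^1$ diffeomorphism of $J$ onto its image, with $u(0)=0$. The quarter period from $0$ to $x_+$ is
\[
  T/4=\int_0^{x_+}\frac{dx}{\sqrt{2E-2V(x)}}
      =\int_0^{x_+}\frac{dx}{\sqrt{2E-u(x)^2}}.
\]
Now substitute $u=u(x)$, i.e. change variables so that the integral becomes $\int_0^{\sqrt{2E}}\frac{dx}{du}\,\frac{du}{\sqrt{2E-u^2}}$. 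The key algebraic point is that $\frac{dx}{du}=\frac{2}{\omega}\cdot\frac1{1-h'(x)}$, and one must show that the ``total'' contribution, after combining the branch from $0$ to $x_+$ with the branch from $0$ to $x_-$ (where $u$ runs over $[-\sqrt{2E},0]$), produces the constant $\frac{2}{\omega}$ times $\int$ of $1/\sqrt{2E-u^2}$. This is where the involution identity does the work: the point with parameter value $-u$ on the left branch is $h$ of the point with value $u$ on the right branch — indeed if $u(x)=\tfrac{\omega}2(x-h(x))$ then $u(h(x))=\tfrac{\omega}2(h(x)-h(h(x)))=\tfrac{\omega}{2}(h(x)-x)=-u(x)$ — so that the half period from $x_-$ to $x_+$ pairs up symmetrically. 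Carrying this out, $T/2=\frac{2}{\omega}\int_{-\sqrt{2E}}^{\sqrt{2E}}\frac{du}{\sqrt{2E-u^2}}\cdot\frac12\cdot(\text{something that telescopes to }1)$; the standard integral $\int_{-1}^{1} ds/\sqrt{1-s^2}=\pi$ then gives $T=2\pi/\omega$, independent of $E$. I would present this cleanly by noting that the substitution $x\mapsto u(x)$ maps the orbit to the circular orbit of the linear oscillator $\ddot u=-\omega^2 u$ in the $u,\dot u$ plane, with the two branches of $u^{-1}$ on the positive/negative side being interchanged by $h$, so the time parametrization is literally that of the harmonic oscillator.

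For the converse, suppose the origin is an isochronous center with period $2\pi/\omega$, $\omega=\sqrt{g'(0)}$. On a maximal interval $J=(x_-^{\max},x_+^{\max})$ around $0$ over which all level sets $\{V=E\}$ give periodic orbits, define, for each small $E>0$, the right turning point $x_+(E)$ and the left turning point $x_-(E)$; set $h(x_+(E))=x_-(E)$ and extend by $h(x_-(E))=x_+(E)$, $h(0)=0$. Isochrony is equivalent, via the classical Abel-integral inversion of the period function, to $V$ having a prescribed relationship between its two inverse branches: writing $x=V_+^{-1}(v)$ and $x=V_-^{-1}(v)$ for the right/left branches of $V$, the period being constant $=2\pi/\omega$ for all $v=E$ forces $V_+^{-1}(v)-V_-^{-1}(v)=\frac{2}{\omega}\sqrt{2v}$ — this is the well-known fact (e.g. from the half-period identity $\int_{x_-}^{x_+}dx/\sqrt{2E-2V}\equiv$ const being equivalent, after an Abel transform, to the difference of the inverse functions being $\propto\sqrt{E}$). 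Hence $x-h(x)=\pm\frac{2}{\omega}\sqrt{2V(x)}$ on each side, which rearranges to $V(x)=\tfrac{\omega^2}{8}(x-h(x))^2$; one then checks $h$ so defined is $C^1$ (from smoothness of the turning-point functions, using $g'(0)>0$ so $V$ is a genuine $C^1$ Morse-type minimum at $0$), satisfies $h\circ h=\mathrm{id}$ by construction, $h(0)=0$, and $h'(0)=-1$ (differentiate $x-h(x)=\frac{2}{\omega}\sqrt{2V(x)}$ at $0$ and use $V(x)\sim\tfrac12 g'(0)x^2$). That verifies all the properties in Definition~\ref{involution}.

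The main obstacle is the converse, specifically the rigorous justification that constant period is equivalent to $x-h(x)=\tfrac{2}{\omega}\sqrt{2V(x)}$ — i.e. inverting the Abel integral operator $E\mapsto\int_{x_-(E)}^{x_+(E)} dx/\sqrt{2E-2V(x)}$ and extracting from ``$\equiv\pi/\omega$'' the pointwise identity on the inverse branches of $V$. One must be careful that $V$ need not be monotone on each side a priori — but $g(x)=V'(x)$ has the sign of $x$ near $0$ (since $xg(x)>0$ is implicit in the center hypothesis, or follows from $g'(0)>0$ plus $g(0)=0$ on a small enough neighborhood), so $V$ is strictly monotone on each side of $0$ on a possibly shrunk $J$, and the inverse branches $V_\pm^{-1}$ are well defined and $C^1$ there; the Abel inversion is then the standard computation (differentiate the period identity, or recognize it as a fractional integral of order $1/2$ whose inverse is known). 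Everything else — the forward direction and the checking of Definition~\ref{involution} — is routine once the substitution $u=u(x)$ and the identity $u\circ h=-u$ are in hand.
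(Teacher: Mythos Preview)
The paper does not prove this theorem: it defers to Proposition~1 of~\cite{z3} and to Theorem~2.1 and Corollary~2.2 of~\cite{z4}. So there is no in-paper argument to compare against; your plan stands on its own, and it is the standard route.

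Two points worth tightening. First, the integral $\int_0^{x_+} dx/\sqrt{2E-2V}$ is \emph{not} a quarter period unless $V$ is even; you implicitly correct this by passing to the half period over $[x_-,x_+]$, but the wording should be fixed. Second, the phrase ``something that telescopes to $1$'' is the entire content of the forward direction and should be written out. With $\phi=u^{-1}$, the identity $u\circ h=-u$ gives $\phi(-v)=h(\phi(v))$, hence $\phi'(-v)=-h'(\phi(v))\,\phi'(v)$; reflecting the negative half of the $u$-integral via $v\mapsto -v$ then yields
\[
  \frac{T}{2}=\int_0^{\sqrt{2E}}\frac{\phi'(v)\bigl(1-h'(\phi(v))\bigr)}{\sqrt{2E-v^2}}\,dv
  =\frac{2}{\omega}\int_0^{\sqrt{2E}}\frac{dv}{\sqrt{2E-v^2}}=\frac{\pi}{\omega},
\]
since $\phi'(v)\bigl(1-h'(\phi(v))\bigr)\equiv 2/\omega$ by the definition of $u$. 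This is presumably what you meant; it should appear explicitly.

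The converse via Abel inversion is correct as outlined. For the $C^1$ regularity of $h$ at $0$, make the limit explicit: differentiating $x-h(x)=\tfrac{2}{\omega}\sqrt{2V(x)}$ gives $1-h'(x)=\tfrac{2}{\omega}\,g(x)/\sqrt{2V(x)}\to 2$ as $x\to0$ (from $g'(0)=\omega^2$), so $h'(x)\to-1$.
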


The proof is included in the proof of Proposition~1 in~\cite{z3}, by one of the present authors,  as a particular case. Formula \eqref{Visochronous} corresponds to formula~(6.2) in the paper~\cite{z3}. A detailed proof can be also found in the recent~\cite{z4}; see Theorem~2.1 and Corollary~2.2 in~\cite{z4}. This last paper also contains the following local necessary conditions for isochrony:
\begin{equation}
  V^{(4)}(0)=\frac{5V^{3}(0)^2}{3V''(0)},\quad
  V^{(6)}(0)=\frac{7V'''(0)V^{(5)}(0)}{V''(0)}-
  \frac{140V'''(0)^4}{9V''(0)^3},
\end{equation}
which can be deduced by taking successive derivatives of the involution relation $h(h(x))\equiv x$ at $x=0$.

Inserting the involution~\eqref{h-rational} into formula~\eqref{Visochronous} we obtain the following isochronous potential
\begin{equation}\label{V-upperrectangularhyperbola}
  V(x)=\frac{\omega^2}{8}\,x^2
  \Bigl(\frac{2+a x}{1+a x}\Bigr)^2,\quad
  x\in J=\begin{cases} (-1/a,+\infty),&a>0\\
  (-\infty,+\infty),&a=0\\
  (-\infty,-1/a),&a<0.\end{cases}
\end{equation}
This is the only rational potential for which there is isochrony, as proved in~\cite{ChalykhVeselov} and in~\cite{Asorey2007}. Of course, it is not defined globally on~$\R$, except in the trivial case $a=0$.

Formula~\eqref{Visochronous} implies the following \emph{global} inequality:
\begin{equation}\label{globalInequality}
  V(x)\ge\frac{\omega^2}{8}x^2,
  \qquad x\in J.
\end{equation}
Simply observe that $x$ and $h(x)$ have opposite signs, so that $x-h(x)\ge x$ when $x\ge0$, and $x-h(x)\le x$ when $x\le0$. The inequality~\eqref{globalInequality} is meaningful not for small~$x$ (because $V(x)\sim \omega^2 x^2/2$ as $x\to0$) but rather as~$x\to \pm\infty$. In particular, global isochronous potentials are \emph{at least quadratic} at infinity. If $V(x)$ grows more than quadratically as~$x\to+\infty$, it must grow not more than quadratically as~$x\to-\infty$, and the other way round.

%%%%%%%%%%%%%%%%%%%%%%%%%%%%%%%%%%%%%%%%%%%%%%%%%
\section{Global centers}\label{centriglobali}

Our approach to centers on the whole $\R^2$ for  $\ddot x=-g(x)$ is based on the following geometric characterization of global involutions in terms of zero sets of symmetric functions of two variables:

\begin{Theorem}\label{globalinvolution}
Let $f:\Omega\to\R$ be a $C^1$ function on the open set $\Omega \subseteq\R^2$ such that: $(0,0)\in\Omega$, $f(0,0)=0$, and
\begin{equation}\label{symmetry}
  (x,y)\in \Omega\quad\Longrightarrow\quad
  (y,x)\in\Omega,\quad f(y,x)=f(x,y).
\end{equation}
Let $\Gamma$ be the connected component of $f^{-1}(0)$ that contains the origin. Suppose that $\partial_2f(x,y)\ne 0$ for all $(x,y)\in\Gamma$, and that $\Gamma$ project itself onto the whole $x$-axis, namely, for each $x\in\R$ we have $(x,y)\in\Gamma$ for some $y\in\R$. Then $\Gamma$ is the graph of a global involution $h$ that is defined on the whole~$\R$. All global involutions can be obtained this way.
\end{Theorem}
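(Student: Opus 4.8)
The plan is to prove the two directions separately. For the forward direction, suppose $f$ satisfies the stated hypotheses and let $\Gamma$ be the connected component of $f^{-1}(0)$ through the origin. Since $\partial_2 f\neq 0$ on $\Gamma$, the implicit function theorem applies at every point of $\Gamma$: locally $\Gamma$ is the graph of a $C^1$ function $y=h(x)$. These local graphs patch together into a single $C^1$ function $h$ defined on the projection of $\Gamma$ to the $x$-axis, and by the surjectivity hypothesis that projection is all of $\R$; one should also note that $h$ is single-valued globally because $\Gamma$, being a connected $1$-manifold that is everywhere a vertical-line-transverse graph, cannot fold back (if $(x,y_1)$ and $(x,y_2)$ both lay on $\Gamma$ with $y_1\neq y_2$, connectedness of $\Gamma$ would force a point with a vertical tangent, contradicting $\partial_2 f\neq 0$). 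So $\Gamma=\{(x,h(x)):x\in\R\}$ with $h\in C^1(\R)$. I would then check the four properties in Definition~\ref{involution}. That $0\in J=\R$ and $h(0)=0$ is immediate from $f(0,0)=0$ and $(0,0)\in\Gamma$. For $h^{-1}=h$: the symmetry $f(x,y)=f(y,x)$ means $f^{-1}(0)$ is invariant under the reflection $\sigma(x,y)=(y,x)$; since $\sigma$ is a homeomorphism fixing the origin, it maps the connected component $\Gamma$ through the origin onto itself, i.e.\ $(x,h(x))\in\Gamma\Rightarrow(h(x),x)\in\Gamma$, which says exactly $h(h(x))=x$. This also shows $h$ is a bijection of $\R$ onto itself, and being $C^1$ with $C^1$ inverse ($=h$) it is a diffeomorphism. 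Finally $h'(0)=-1$: differentiating $h(h(x))=x$ at $0$ gives $h'(0)^2=1$, so $h'(0)=\pm1$; the value $+1$ is excluded because then $h$ would be tangent to the diagonal at the origin, forcing $\Gamma$ to coincide with the diagonal near $0$ (one can see this from the symmetry: writing $h(x)=x+O(x^2)$, the relation $h(h(x))=x$ combined with $\Gamma$ being a single graph symmetric under $\sigma$ leaves only $h=\mathrm{id}$ as the local solution, contradicting that $h$ is a genuine involution — alternatively, rule out $h'(0)=1$ by hypothesis as part of what "involution" requires and remark that the generic symmetric $f$ gives $-1$). So $h$ is an involution on $\R$ and $\Gamma$ is its graph.

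For the converse, let $h:\R\to\R$ be any global involution. I would simply exhibit a symmetric $f$. The cleanest choice is $f(x,y)=\bigl(y-h(x)\bigr)+\bigl(x-h(y)\bigr)$, or more symmetrically $f(x,y)=y-h(x)$ antisymmetrized; but one must take care that $\partial_2 f\neq 0$ on the relevant component. A robust choice is $f(x,y)=y-h(x)$ made symmetric by noting that on the graph of $h$ one has $y=h(x)\iff x=h(y)$, so the set $\{y=h(x)\}$ already equals $\{x=h(y)\}$ and is thus symmetric; then take $f(x,y)=\tfrac12\bigl((y-h(x))+(x-h(y))\bigr)$. Symmetry $f(x,y)=f(y,x)$ is visible by inspection, $f(0,0)=0$ since $h(0)=0$, and $\partial_2 f(x,y)=\tfrac12(1-h'(x))$... — here one sees the subtlety: this vanishes where $h'=1$, which happens nowhere on the graph of a genuine involution except possibly at isolated points, but to be safe I would instead use $f(x,y)=(y-h(x))\cdot(\text{positive symmetric factor})$ or simply verify that the component $\Gamma$ of $f^{-1}(0)$ through the origin is exactly the graph of $h$ and that along it $\partial_2 f=\tfrac12(1-h'(x))\neq 0$ because an involution has $h'<0$ wherever the graph is a graph transverse to verticals — indeed differentiating $h(h(x))=x$ gives $h'(h(x))h'(x)=1$, and since $h$ reverses orientation near $0$ (as $h'(0)=-1<0$) it reverses orientation everywhere it is defined as a diffeomorphism, so $h'(x)<0$ for all $x$, hence $1-h'(x)>0$ everywhere. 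Thus $\partial_2 f$ never vanishes on $\Gamma$, the projection of $\Gamma$ is all of $\R$ since $h$ is globally defined, and $\Gamma$ is the graph of $h$. This proves every global involution arises from such an $f$.

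The main obstacle I expect is the bookkeeping in the forward direction around \emph{why the connected component $\Gamma$ is a single-valued graph over all of $\R$} rather than, say, a curve that is locally a graph but globally multivalued or that projects onto a proper subset of the $x$-axis. The surjectivity onto the $x$-axis is assumed, so what remains is single-valuedness: this follows from $\Gamma$ being a connected $1$-manifold (by the implicit function theorem and $\partial_2 f\neq 0$) on which the coordinate $x$ is a local diffeomorphism onto $\R$, hence a covering map, hence — since $\R$ is simply connected — a global diffeomorphism $\Gamma\cong\R$; its inverse composed with the $y$-coordinate is the sought $h$. A secondary delicate point, already flagged above, is pinning down $h'(0)=-1$ versus $h'(0)=+1$; the role of the symmetry hypothesis is precisely to exclude the trivial tangency, and I would present this carefully, possibly using the fact that the reflection $\sigma$ restricted to the invariant curve $\Gamma$ is an orientation-reversing involution of $\Gamma\cong\R$ fixing the origin, so in the coordinate $x$ it acts as an orientation-reversing involution of $\R$ fixing $0$, forcing its derivative at $0$ to be $-1$ — and that derivative is exactly $h'(0)$.
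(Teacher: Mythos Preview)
Your converse is exactly the paper's: the function $f(x,y)=\tfrac12\bigl((y-h(x))+(x-h(y))\bigr)$ equals $\tfrac12\bigl(x+y-h(x)-h(y)\bigr)$, which is what the paper writes down. One slip: $\partial_2 f(x,y)=\tfrac12(1-h'(y))$, not $\tfrac12(1-h'(x))$; your subsequent argument that $h'<0$ everywhere (diffeomorphism, $h'(0)=-1$, so $h'$ never changes sign) then works as written.

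In the forward direction your covering-space justification of single-valuedness is more explicit than the paper, which simply asserts that $\Gamma$ is a graph by the implicit function theorem; that extra care is a genuine improvement.

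The real gap is your treatment of $h'(0)=-1$. Your final argument asserts that $\sigma|_\Gamma$ is orientation-reversing, but you never prove this, and it is exactly the point at issue: if $\Gamma$ happened to be the diagonal then $\sigma|_\Gamma$ would be the identity. The earlier attempts (``rule out $h'(0)=1$ by hypothesis'', ``the generic symmetric $f$ gives $-1$'') are not arguments. You could close the gap by noting that $h'(0)=1$ forces $h=\mathrm{id}$ (an increasing involution of $\R$ is the identity), hence $\Gamma$ is the diagonal, and then differentiating $f(x,x)\equiv 0$ together with $\partial_1 f(x,x)=\partial_2 f(x,x)$ gives $\partial_2 f=0$ on $\Gamma$, a contradiction.

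The paper avoids all of this with a one-line computation you missed: the symmetry $f(x,y)=f(y,x)$ gives $\partial_1 f(x,y)=\partial_2 f(y,x)$. Since $\Gamma$ is $\sigma$-invariant and $\partial_2 f\neq 0$ on $\Gamma$, it follows that $\partial_1 f\neq 0$ on $\Gamma$ too; both partials are continuous and nonvanishing on the connected set $\Gamma$, and they agree at $(0,0)$, so they have the same sign throughout. Hence
\[
  h'(x)=-\frac{\partial_1 f(x,h(x))}{\partial_2 f(x,h(x))}<0\quad\text{for all }x,
\]
and in particular $h'(0)=-\partial_1 f(0,0)/\partial_2 f(0,0)=-1$. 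This simultaneously gives $h'(0)=-1$ and the global monotonicity $h'<0$ without ever invoking $h\circ h=\mathrm{id}$.
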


\begin{proof} By the implicit function theorem, $\Gamma$~is the graph of a $C^1$ function $h$ with $h(0)=0$. Moreover, $h$~is defined on the whole~$\R$ since $\Gamma$ projects itself onto the whole $x$-axis. From~\eqref{symmetry}
we have $\partial_1f(x,y)=\partial_2f(y,x)$ for $(x,y)\in\Omega$ so $\partial_1f$ never vanishes on $\Gamma$ and has the same sign as $\partial_2f$. We deduce that
\begin{equation*}
  h'(x)=-\frac{\partial_1f\bigl(x,h(x)\bigr)}
  {\partial_2f\bigl(x,h(x)\bigr)}<0,\qquad x\in\R,
\end{equation*}
and in particular $h'(0)=-1$. Finally, $f(h(y),y)=f(y,h(y))=0$ shows that $h^{-1}=h$ and $h$ is a global involution.

Conversely, let $h:\R\to\R$ be a global involution and define $f(x,y):= x+y-h(x)-h(y)$. This is a $C^1$ function on $\R^2$, $f(0,0)=0$, and $f(y,x)=f(x,y)$. We have $\partial_2f(x,y)=1-h'(y)>0$ for all $(x,y)\in\R^2$. The graph of $h$ is a subset of the set $\Gamma$ in the statement of the theorem, so $\Gamma$ projects itself onto the $x$-axis (and it actually coincides with the graph of~$h$).
\end{proof}

\begin{figure}
\begin{center}
\includegraphics[width=.5\textwidth]{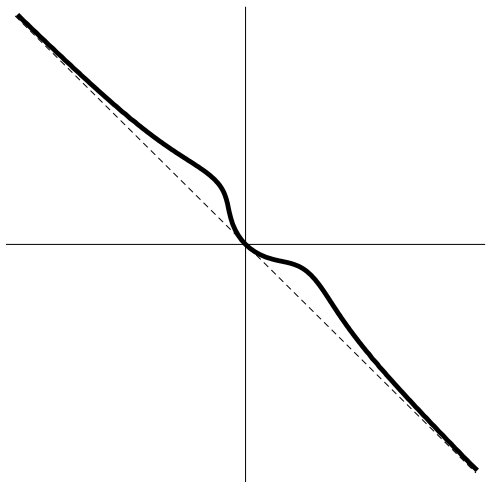}
\end{center}
\caption{Global involution defined by \eqref{example1}.}
\label{fighgi}
\end{figure}

The theorem can be applied to the $f$ of the form $f(x,y)= \varphi(x) +\varphi(y)$, where $\varphi$ is a diffeomorphism of~$\R$ onto itself, and $\varphi(0)=0$. For a non trivial example let us consider the following function of two variables
\begin{equation}\label{example1}
  f(x,y):=x-x^2+x^5+y-y^2+y^5,\quad (x,y)\in\R^2.
\end{equation}
All conditions in Theorem~\ref{globalinvolution} are satisfied since $\partial_2f(x,y)=1-2y+5y^4\ne 0$ for all $y\in\R$ and the function $x\mapsto x-x^2+x^5$ is a diffeomorphism $\R\to\R$ so the zero set of~$f$ projects itself onto the whole~$\R$. So a global (implicit) involution $h$ is defined. It is a non elementary algebraic function. In this example the zero set is connected and it coincides with $\Gamma$. The graph of~$h$ is depicted in Figure~\ref{fighgi}.

To get an example where there are more connected components, and $\partial_2f(x,y)=0$ at some $(x,y)\notin\Gamma$, we can simply modify $f$ as 
\begin{equation*}
  f(x,y)=\bigl(x-x^2+x^5+y-y^2+y^5\bigr)
  \bigl((x-1)^2+(y-1)^2-1\bigr),
\end{equation*}
which yields the same global involution (notice that $\partial_2 f(0,1)=0$).

Sometimes the equation $f(x,y)=0$ can even be solved explicitly. For instance for the family 
\begin{equation}\label{example2}
  f(x,y):=x+y+\rho\bigl(e^x+e^y-2\bigr)=0,\quad
  (x,y)\in\R^2, \rho\ge 0.
\end{equation}
Again, all conditions in Theorem~\ref{globalinvolution} are satisfied since $\partial_2f(x,y)=1+\rho e^y$ for all $y\in\R$ and the function $x\mapsto x+\rho e^x$ is a diffeomorphism $\R\to\R$ so the zero set of $f$ projects itself on the whole $\R$. In this case the global involution can be written in terms of  the Lambert  real function $W_0: [-1/e,+\infty)\to [-1,+\infty)$ (the principal branch of the
product logarithm so $W_0\bigl(xe^x)=x$), as
\begin{equation*}
  x\mapsto -x+\rho(2-e^x)-
  W_0\bigl(\rho\, e^{-x+\rho(2-e^x)}\bigr).
\end{equation*}
By Remark~\ref{remarkfamily} we finally obtain the two real parameters global family
\begin{multline}\label{h-Lambert}
  h(x)=-x+\frac{\rho}{a}(2-e^{ax})-
  \frac{1}{a}\,W_0\bigl(\rho\, e^{-{ax}+\rho(2-e^{ax})}\bigr),\\
  x\in\R,\rho\ge 0, a\ne 0.
\end{multline}
In Figure~\ref{fighl} we can see the graph of the involution $h$ for $a>0$. The growth of~$h(x)$ is exponential as $x\to+\infty$ and logarithmic as~$x\to-\infty$. Hence the corresponding isochronous potentials $V(x)$ will be exponential as $x\to+\infty$ and quadratic as~$x\to-\infty$.

\begin{figure}
\begin{center}
\includegraphics[width=.5\textwidth]{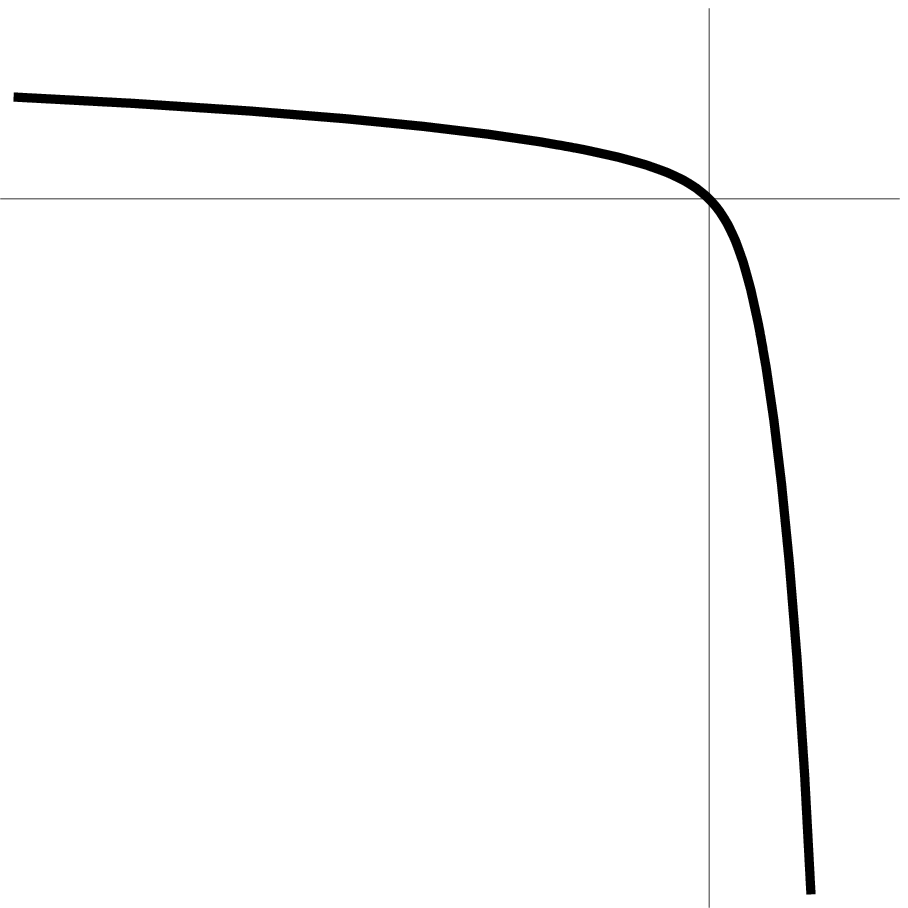}
\end{center}
\caption{Global involution \eqref{h-Lambert}.}
\label{fighl}
\end{figure}

%%%%%%%%%%%%%%%%%%%%%%%%%%%%%%%%%%%%%%%%%%%%%%%%%
\section{Stillinger centers from hyperbolas}\label{stillinger}

Let us consider the following symmetric quadratic function with two real parameters
\begin{multline}
  f(x,y)=\lambda (x^2+y^2)+2(\lambda+2a^2)xy+
  4a(\lambda+a^2)(x+y),\\
  (x,y)\in\R^2,\quad \lambda>0,\quad a\ne 0.
\end{multline}
The equation $f(x,y)=0$ gives a hyperbola with center $(-a,-a)$, vertices $(0,0),(-2a,-2a)$, and asymptotes 
\begin{equation}\label{asymptoteshyperbola}
  \lambda (y+a)=\bigl(-(\lambda+2a^2)\pm 
  2a\sqrt{\lambda+a^2}\bigr)(x+a).
\end{equation}
Both values $-(\lambda+2a^2)\pm 2a\sqrt{\lambda+a^2}<0$, so each branch of the hyperbola projects itself on the whole $x$-axis.
If $a>0$ the upper branch intersects the origin,  the lower one if $a<0$ (in both cases  for $x\to-\infty$ the asymptote of the branch through the origin is the one with the $+$ sign in \eqref{asymptoteshyperbola} and for $x\to+\infty$ the one with the $-$ sign).

All conditions in Theorem~\ref{globalinvolution} hold, in particular $f(0,0)=0$, $f(x,y)\allowbreak=f(y,x)$ for all $(x,y)\in\R^2$, and we can check that $\partial_2f(x,y)\ne 0$ for all points $(x,y)$ such that $f(x,y)=0$. By Theorem~\ref{globalinvolution}, the equation $f(x,y)=0$  defines a global involution $h:\R\to\R$.  In the present case we can even write down explicitly 
\begin{multline}\label{hStillinger}
  h(x):=-a-\frac{1}{\lambda}\Bigl((\lambda+2a^2)(a+x)+\\
  -2a\sqrt{\lambda+a^2}\,\sqrt{\lambda+(a+x)^2}\Bigr),\qquad 
  x\in\R,\lambda>0,
\end{multline}
where $a\in\R$, since $a=0$ gives the trivial $h(x)=-x$. 

Replacing the involution into formula~\eqref{Visochronous} we obtain the following isochronous potential
\begin{multline}\label{potentialStillingerinZampForm}
  V(x)=\omega^2 \,
  \frac{\lambda+a^2}{2\lambda^2}
  \biggl(\lambda a^2+(a+x)\Bigl((\lambda+2a^2)(a+x)+{}\\
  -2a\sqrt{\lambda+a^2}\;
  \sqrt{\lambda+(a+x)^2}\Bigr)\biggr),\qquad
  x\in\R,  \omega\ne 0, \lambda>0.
\end{multline}
So the origin is a global center for $\ddot x=-g(x)$ with $g(x)= V'(x)$, i.e.,
\begin{equation}\label{gStillingerinZampForm}
  g(x)=\omega^2
  \,\frac{(\lambda+a^2)^{\frac{3}{2}}}{\lambda^2}
  \left((a+x)\frac{\lambda+2a^2}{\sqrt{\lambda+a^2}}-
  a\,\frac{\lambda+2(a+x)^2}{\sqrt{\lambda+(a+x)^2}}\right).
\end{equation}
The function $g$ is linear if and only if $a=0$, in which case $g(x)=\omega^2\,x$.

\begin{Remark} If we introduce the new parameters $\xi,\beta$:
\begin{equation}
  \lambda=\frac{1-\xi^2}{\beta},\quad
  a=\frac{\xi}{\sqrt{\beta}},\qquad
  \xi\in(-1,1), \beta>0,
\end{equation}
then \eqref{potentialStillingerinZampForm} takes Stillinger's form (see~\cite{S} formula~(4))
\begin{equation}
  V(x)= \frac{\omega^2}{2(1-\xi^2)^2}
  \left(x+\frac{\xi}{\sqrt{\beta}}
  \biggl(1-\sqrt{1+2\xi\sqrt{\beta}\,x+\beta\,x^2}
  \biggr)\right)^2.
\end{equation}
\end{Remark}

A totally different technique was used in~\cite{z4} to find some explicit examples of local isochronous centers. We are going to show that one of them can be rewritten as formula~\eqref{gStillingerinZampForm} after a suitable reparameterization. Indeed, consider the $g$ defined in formula~(5.12) in~\cite{z4}, that is,
\begin{equation}\label{gCMP}
  g(x)=\frac{2 c\omega^2 }{(b^2 - 4 c)^2}
  \Biggl((b^2+4c)(b + 2 x)  +b\,\frac{b^2-4 c-2
  (b + 2 x)^2}{{{\sqrt{1 +x(b+x)/c}}}}\Biggr).
\end{equation}
Here $\omega>0$, $b,c\in \R$, $c\ne 0$, $b^2-4c\ne 0$. Each of these functions $g$ in~\eqref{gCMP} gives a global center on the whole $\R^2$ whenever it is defined on the whole $\R$, that is, if and only if $1 +x(b+x)/c>0$, so that  its square root in the denominator in~\eqref{gCMP} exists for all $x\in\R$ and never vanishes. This is equivalent to $b^2-4c<0$. By introducing the new parameters $\lambda:=4c-b^2$, $a:=b/2$ we have that the condition
$\lambda>0$ is necessary and sufficient for $g$ to be defined on the whole~$\R$. Notice that this implies $c>0$, in particular $c\ne 0$.  With the new parameters and for $\lambda>0$ we reduce to~\eqref{gStillingerinZampForm}. 

The functions $g$ of formula~\eqref{gCMP} were found by searching for $g$ such that the 4-dimensional system $\ddot x=-g(x)$, $\ddot y=-g'(x)y$, where $g\in C^1$, $g(0)=0$, $g'(0)>0$, becomes superintegrable. More precisely, the system admits a first integral which is positive definite at the origin, so that it constrains the orbits near the origin on compact sets. That method only proves that all these functions $g$ give a \emph{local} isochronous center near the origin for $\ddot x=-g(x)$. The problem of whether the center is indeed \emph{globally} isochronous was not addressed in the original paper, but it can be also recovered by an analyticity argument, without appealing to Theorem~\ref{isochronous}.

%%%%%%%%%%%%%%%%%%%%%%%%%%%%%%%%%%%%%%%%%%%%%%%%%
\section{Dorignac global potential revisited}\label{dorignac}

Let us consider
\begin{equation}
f(x,y)=\left(\sqrt{1+8e^{3\beta x}}-1\right)\left(\sqrt{1+8e^{3\beta y}}-1\right)-4,\ (x,y)\in\R^2,
\end{equation}
for the real parameters $\omega,\beta\ne 0$. We have $f(0,0)=0$, $f(x,y)=f(y,x)$, and
\begin{equation*}
  \partial_2 f(x,y)=
  12\beta e^{3\beta y}
  \frac{\sqrt{1+8e^{3\beta x}}-1}{\sqrt{1+8e^{3\beta y}}}
  \ne 0,\quad
  (x,y)\in\R^2.
\end{equation*}
The set $f(x,y)=0$ projects itself on the whole $x$-axis since the function $x\mapsto \sqrt{1+8e^{3\beta x}}-1$ is a diffeomorphism of~$\R$ onto $(0,+\infty)$. By Theorem~\ref{globalinvolution}, the equation $f(x,y)=0$ defines a global involution $h:\R\to\R$.  Also now we can solve explicitly the equation and get
\begin{equation}\label{hDorignac1}
  h(x)=\frac{1}{\beta}
  \ln\frac{2\,e^{\beta x}}{\sqrt{1+8e^{3\beta x}}-1},
  \qquad x\in\R.
\end{equation}
Next, formula~\eqref{Visochronous} of Theorem~\ref{isochronous} gives the following isochronous potential which is one of Dorignac's explicit global examples (see formula~(35) in~\cite{dorignac})
\begin{equation}
  V(x)=\frac{\omega^2}{8\beta^2}
  \biggl(\ln\frac{\sqrt{1+8e^{3\beta x}}-1}{2}\biggr)^2,
  \quad x\in\R,\ \omega,\beta\ne 0.
\end{equation}

%%%%%%%%%%%%%%%%%%%%%%%%%%%%%%%
\bigskip

\end{document}